
\documentclass[reqno,A4paper]{amsart}

\usepackage{amsmath}
\usepackage{amssymb}
\usepackage{amsthm}
\usepackage{enumerate}
\usepackage{mathrsfs} 
\usepackage{eqlist}
\usepackage{array}

\usepackage{times}
\usepackage{mathtools}
\usepackage{subcaption}
\usepackage{tikz}

\setlength{\textwidth}{150mm}
\setlength{\textheight}{206mm}
\setlength{\oddsidemargin}{5mm}
\setlength{\evensidemargin}{5mm}



\theoremstyle{plain}
\newtheorem{theorem}{Theorem}[section]

\newtheorem{lemma}[theorem]{Lemma}


\theoremstyle{definition}
\newtheorem{definition}{Definition}
\newtheorem{remark}{\textup{Remark}}


\numberwithin{equation}{section}



\newcommand{\N}{\mathbb{N}}
\newcommand{\setm}{\setminus}
\newcommand{\family}[2]{\left \langle #1 | #2 \right \rangle}

\newcommand{\rstr}[1]{\mathbb{#1}}
\newcommand{\A}[1]{A_{#1}}
\newcommand{\Amin}[1]{A_{#1}^{-}}
\newcommand{\rstrA}[2]{\rstr{A}_{#1,\,#2}}
\newcommand{\rstrAcarka}[2]{\rstr{A'}_{#1,\,#2}} 
\newcommand{\rstrAbar}[2]{\rstr{\bar{A}}_{#1,\,#2}} 
\newcommand{\B}[1]{B_{#1}}
\newcommand{\Bmin}[1]{B_{#1}^{-}}
\newcommand{\rstrB}[1]{\rstr{B}_{#1}}
\newcommand{\R}[3][]{R_{#2,#3}^{#1}} 
\newcommand{\vc}[1]{\mathbf{#1}}

\newcommand{\val}[2]{\mathrm{Num}_{#2}(#1)} 
\newcommand{\less}[2]{\mathrm{Num}_{#2}{(\A{#1})}}
\newcommand{\lessB}[2]{\mathrm{Num}_{#2}{(\B{#1})}}
\newcommand{\inum}[2]{\varphi_{#1}({#2})}
\newcommand{\ivect}[1]{\vc{v}_{#1}}
\newcommand{\id}{\mathrm{id}}
\newcommand{\nic}{\emptyset}
\newcommand{\hv}{\ast}
\newcommand{\relset}[1]{\mathcal{#1}}
\newcommand{\pol}[1]{\mathrm{Pol}(#1)}
\newcommand{\nuf}[1]{\mathrm{NU}(#1)}
\newcommand{\nuff}[2]{\mathrm{NU}({#1},{#2})}

\newcommand\numberthis{\addtocounter{equation}{1}\tag{\theequation}}


\newcommand{\POZN}[1]{\textcolor{red}{POZNÁMKA: #1}}

\newcommand{\MyFig}[1]{\begin{figure}[ht]#1\end{figure}}
\newcommand\relkrok{-.5}
\newcommand\relrozestup{1.7}
\newcommand\relbod{2pt}
\newcommand\reltext{3pt}
\newcommand\reltecky{-5pt}
\newcommand\reledge{}

\begin{document}

\title[The minimal arity of near unanimity polymorphisms]%
{The minimal arity of near unanimity polymorphisms}
\author[Libor Barto \and Ond\v rej Draganov]%
{Libor Barto \and Ond{\v r}ej Draganov}

\newcommand{\acr}{\newline\indent}

\address{\llap{}Department of Algebra\acr
                   Faculty of Mathematics and Physics\acr
                   Charles University\acr
                   Sokolovsk\'a 83, 18675 Praha 8\acr
                   Czech Republic}
\email{libor.barto@gmail.com, ondra@draganov.cz}



\thanks{This work was supported by the Czech Science Foundation Grant No. 13-01832S}

\subjclass[2010]{Primary 08A40, 08B05; Secondary 08A70} 
\keywords{near unanimity operation, polymorphism}

\begin{abstract}
Dmitriy Zhuk has proved that there exist relational structures which admit near unanimity polymorphisms, but the minimum arity of such a polymorphism is large and almost matches the known upper bounds. We present a simplified and explicit construction of such structures and a detailed, self--contained proof.
\end{abstract}

\maketitle

\section{Introduction}

An operation $t: A^n \to A$ of arity $n \geq 3$ is called a \emph{near--unanimity} (\emph{NU}, for short) operation if 
\[
t(b,a, \ldots, a) = t(a,b,a, \ldots, a) = \cdots = t(a, \ldots, a,b) = b
\]
for every $a,b \in A$. Relational structures admitting an \emph{NU polymorpism}, that is, a compatible NU operation, are interesting for numerous reasons.
We refer the reader to~\cite{Bar13,BKW17,FV98,LLZ05,M09,Zh14} and references therein. 

The first author~\cite{Bar13} and, independently, D. Zhuk~\cite{Zh14} have proved that it is decidable whether a finite relational structure $\rstr{A}$ admits a near--unanimity polymorphism. Both proofs give an upper bound on the smallest arity of an NU polymorphism. A refinement of these results from~\cite{BB17} (generalizing~\cite{Bul14}) proves the following upper bound.

\begin{theorem} \label{thm:upper}
If a relational structure $\rstr{A}$ with universe of size $n \geq 2$ admits an NU polymorphism, then it admits one of arity
\[
\frac12 (2m-2)^{3^n} + 1,
\]
where $m \geq 2$ is the maximum arity of a relation in $\rstr{A}$.
\end{theorem}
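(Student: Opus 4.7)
The natural route is via Baker and Pixley's classical theorem: a finite relational structure $\rstr{A}$ admits an NU polymorphism of arity $k+1$ if and only if every $\pol{\rstr{A}}$-invariant relation is the intersection of its projections onto $k$-element subsets of coordinates. Consequently, it suffices to show that every \emph{critical} invariant relation of $\rstr{A}$---one that is not expressible as such an intersection at any smaller arity---has arity at most $\tfrac12(2m-2)^{3^n}+1$. The plan is therefore to fix a critical relation $R$ in the relational clone of $\rstr{A}$ and to bound its arity by a combinatorial analysis that uses both the (possibly huge) NU polymorphism granted by the hypothesis and the primitive positive definition of $R$ from the basic relations of arity at most $m$.

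First I would record the rigidity of a critical $r$-ary relation: for every coordinate $i$ there is a ``witness'' tuple lying in the projection of $R$ away from $i$ but not extending to any tuple of $R$. The pp-definition of $R$ in terms of the $m$-ary generators shows that each such witness is caused by a single generating relation together with a distinguished position inside it, so to every coordinate of $R$ one can attach a piece of local data of bounded complexity. Applying the NU polymorphism coordinate-wise to carefully chosen tuples of $R$ then yields the flexibility needed to manipulate this data and to assemble new tuples of $R$ out of old ones.

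The quantitative core is a coordinate-classification argument. One assigns to each coordinate of $R$ a ``type'' drawn from a set of size at most $(2m-2)^{3^n}$---the base $2m-2$ reflecting the number of distinguishable positions in an $m$-ary generating relation (after trivial ones are discarded), and the exponent $3^n$ coming from an enumeration of the possible local configurations of $\rstr{A}$ over its $n$-element universe (for example, a classification of each element as ``forbidden'', ``optional'', or ``forced'' at the coordinate in question). If two coordinates of $R$ share a type, one shows that the NU polymorphism can be invoked to eliminate one of them, exhibiting $R$ as an intersection of its proper projections and contradicting criticality. A standard pigeon-hole then yields $r \le \tfrac12(2m-2)^{3^n}+1$, the factor $\tfrac12$ recording an unordered-pair symmetry inherent in the type construction.

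The decisive step, and the one I expect to be the main obstacle, is establishing the redundancy lemma that underlies the pigeon-hole: two coordinates of identical type must actually be proved expendable, which requires a delicate interplay between the NU polymorphism, the $m$-ary generating relations, and the witness tuples extracted in the first step. It is here that the sharp base $2m-2$ and exponent $3^n$ must be earned; a naive bookkeeping loses them. Once this local lemma is in hand, the passage from a bound on the arity of critical relations to the claimed bound on the arity of the NU polymorphism is immediate via Baker--Pixley.
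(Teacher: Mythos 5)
Theorem~\ref{thm:upper} is not proved in this paper at all: it is quoted from the literature (the paper attributes it to~\cite{BB17}, refining~\cite{Bar13,Zh14,Bul14}), and the paper's own contribution is the matching lower bound of Theorem~\ref{thm:lower}. So there is no in-paper argument to compare yours against; I can only judge your proposal on its own terms and against the cited sources, whose strategy you have correctly identified in outline (Baker--Pixley reduction to bounding the arity of critical relations, followed by a pigeonhole on coordinate ``types'').

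As a proof, however, your proposal has a genuine gap, and you name it yourself: the redundancy lemma --- that two coordinates of a critical relation carrying the same type can be collapsed --- is precisely the entire mathematical content of the theorem, and you supply no argument for it. Everything before that point is standard (criticality, witness tuples, the fact that bounding critical arities by $k$ gives $k$-decomposability and hence a $(k+1)$-ary NU operation), and everything after it is arithmetic. Moreover, the specific constants are reverse-engineered rather than derived: you assert that the base $2m-2$ counts ``distinguishable positions in an $m$-ary generating relation'' and that the exponent $3^n$ enumerates a three-way classification of elements, but in the cited arguments the exponent $3^n$ arises from counting nested pairs of subsets $C \subseteq B \subseteq A$ (equivalently, chains of length two in the subset lattice, which track absorption-like data attached to a coordinate), and the factor $\tfrac12$ and base $2m-2$ emerge from a Ramsey/zig-zag count over walks in the pp-definition --- none of which your ``forbidden/optional/forced'' bookkeeping would reproduce without substantial further work. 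In short: the skeleton is the right one, but the decisive step is missing, so this is a plan rather than a proof.
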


Zhuk~\cite{Zh14} has also constructed relational structures witnessing that this upper bound is essentially optimal:

\begin{theorem} \label{thm:lower}
For each $m \geq 3$ and $n \geq 2$ (resp. $m=2$, $n \geq 3$), there exists a relational structure $\rstr{A}$ with universe of size $n$ and relations of arity at most $m$ such that $\rstr{A}$ admits an NU polymorphism, but no NU polymorphism of arity less than or equal to 
\[
(m-1)^{2^{n-2}} \quad (\mbox{resp. } 2^{2^{n-3}})
\]
\end{theorem}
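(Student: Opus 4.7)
The plan is to construct the structures $\rstr{A} = \rstr{A}_{n,m}$ by induction on $n$, verifying simultaneously that $\rstr{A}_{n,m}$ admits an NU polymorphism of arity matching the upper bound of Theorem~\ref{thm:upper} and that no NU polymorphism of arity at most $k_n := (m-1)^{2^{n-2}}$ preserves it. The double-exponential form of $k_n$ strongly suggests a \emph{doubling} inductive step, in which $\rstr{A}_{n,m}$ is obtained from $\rstr{A}_{n-1,m}$ by adjoining one new element together with relations that glue two copies of $\rstr{A}_{n-1,m}$; the squaring $k_n = k_{n-1}^2$ will then follow from a compression argument converting an NU of arity $k$ on $\rstr{A}_{n,m}$ into one of arity roughly $\sqrt{k}$ on $\rstr{A}_{n-1,m}$.

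For the base case $n = 2$ (and analogously $n = 3$ when $m = 2$), the universe is $\{0,1\}$ and a natural candidate relation is the $m$-ary not-all-equal relation $R := \{0,1\}^m \setm \{(0,\dots,0),(1,\dots,1)\}$. The existence of an NU polymorphism of the prescribed arity is a direct verification. For the lower bound, one exhibits $m-1$ tuples of $R$ such that any NU operation of arity $m-1$ applied to them componentwise is forced by the NU identities to produce a constant tuple, which is forbidden by $R$.

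For the inductive step, set $N := k_{n-1}$ and construct $\rstr{A}_{n,m}$ on $\{0,\dots,n-1\}$ by adjoining the element $n-1$ as a ``control'' symbol together with new relations that, on specified coordinates, impose a block structure encoding two independent instances of $\rstr{A}_{n-1,m}$. Given an NU polymorphism $t$ of $\rstr{A}_{n,m}$ of arity $k \le N^2$, partition the $k$ argument positions into two blocks of size at most $N$; by substituting the control symbol $n-1$ into the coordinates outside a well-chosen sub-block and exploiting the NU identities of $t$, extract an operation on $\{0,\dots,n-2\}$ of arity at most $N$ that still satisfies the NU identities and preserves every relation of $\rstr{A}_{n-1,m}$, contradicting the inductive hypothesis.

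The main obstacle is the design of the gluing relations so that simultaneously (a) an NU polymorphism of the required arity extends from $\rstr{A}_{n-1,m}$ to $\rstr{A}_{n,m}$, and (b) the extraction in the inductive step really yields an operation that is NU on $\rstr{A}_{n-1,m}$, not merely a polymorphism. The combinatorial heart of the argument is that the new relations must be strong enough that preservation by $t$, combined with the NU identities, pins down the values of $t$ on tuples containing $n-1$ to the extent that the resulting projected operation provably satisfies the NU identities on the smaller universe; any weaker design would lose factors and fail to reach the exponent $2^{n-2}$.
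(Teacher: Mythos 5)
Your proposal contains two genuine gaps, one in the base case and one in the inductive step.

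The base case fails outright for $m\ge 3$: the structure $(\{0,1\};\mathrm{NAE}_m)$ with $\mathrm{NAE}_m=\{0,1\}^m\setminus\{(0,\dots,0),(1,\dots,1)\}$ admits \emph{no} NU polymorphism of any arity, so it cannot witness the theorem. Indeed, identifying variables pp-defines $\mathrm{NAE}_3(x,y,z)=\mathrm{NAE}_m(x,y,z,\dots,z)$ and $x\ne y \equiv \mathrm{NAE}_3(x,y,y)$; any polymorphism is therefore self-dual, and a short bootstrapping argument (starting from the columns $(1,0,0),(0,1,0),(0,0,1)$, which already defeat the ternary majority, and repeatedly splitting the coordinate set into two blocks plus their complement) forces every NU operation of every arity to map some NAE-columns to a constant tuple. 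The relation that actually works on a two-element universe is of a different Schaefer type, essentially the ``at least one $a$'' relation $\{a,0\}^{m}\setminus\{(0,\dots,0)\}$ (this is what the relation $S_0$ of Subsection~\ref{sec:structureA} degenerates to when the universe is $\{a,0\}$); it is preserved by a threshold NU of arity $m+1$ but by none of arity $m$.

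The inductive step is the heart of the theorem and is not actually carried out: you describe the desired conclusion (an arity-$\le N$ NU operation on $\rstr{A}_{n-1,m}$ extracted from an arity-$\le N^2$ NU polymorphism of $\rstr{A}_{n,m}$) and explicitly defer the design of the gluing relations that would make it true. As stated, the mechanism cannot work: fixing $k-N$ coordinates of an NU operation to a control symbol destroys the NU identities, because a tuple with $N-1$ equal entries, one deviant entry, and $k-N$ control symbols is not near-unanimous, so nothing constrains its value; compatibility with relations can make the derived operation a polymorphism, but forcing it to satisfy \emph{identities} pointwise is exactly the obstruction (and note also that $k\le N^2$ cannot be partitioned into two blocks of size at most $N$ once $k>2N$). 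The paper never produces a smaller-arity NU operation at all. It fixes a single explicit structure on $\{a,0,\dots,n\}$ with $(m+1)$-ary relations $S_0,\dots,S_n$ and runs an induction with $2^n$ steps on \emph{tuples} rather than on structures: for carefully chosen tuples $\ivect{k}$ (Definition~\ref{def:vectors}) it shows $t(\sigma(\ivect{k}))\ne a$, where each step first uses the pp-definable congruences of Lemma~\ref{lem:congruence} and the unary relations to pin down $t(\ivect{k})$ to a two-element set, and then uses compatibility with $S_i$ (Lemma~\ref{lem:comp}) to multiply the number of leading $a$'s by $m$; after $2^n$ steps one reaches $t(a,\dots,a)\ne a$, contradicting preservation of $\{a\}$. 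If you want to keep a structure-level induction, you must replace the arity-compression step by something like this tuple-level amplification; as written, the proposal does not prove the theorem.
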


The lower bounds in Theorem~\ref{thm:lower} indicate that the combinatorial core of the proofs of Theorem~\ref{thm:upper} in \cite{Bar13,Zh14,BB17} cannot be circumvent. It is interesting to compare this with the situation for weak near--unanimity operations, where the upper bound does not depend on $m$ and the original huge upper bound in terms of $n$ that follows from~\cite{MM08} was eventually pushed down to linear~\cite{BK12}. Theorem~\ref{thm:lower} also shows that feasibility results that depend on the arity of NU polymorphism are, for some structures, of purely theoretical interest; for instance, the  bounded strict width algorithm for fixed--template constraint satisfaction problems~\cite{FV98}.

These are some of the reasons that make Theorem~\ref{thm:lower} interesting. 
Unfortunately,
the proof of Theorem~\ref{thm:lower} in~\cite{Zh14} is rather involved:
The relational structures contain a lot of $m$-ary relations (about $2^n$) and they are defined in a complicated way from a matrix, which is recursively constructed. 
Moreover, the computation of the minimal arity of an NU polymorphisms is quite sketchy and requires some concepts from the more complex first part of the paper, which proves a version of Theorem~\ref{thm:upper}.

In this paper we present a simplified and explicit construction of structures from Theorem~\ref{thm:lower} and a detailed, self--contained proof. 

We require only basic knowledge of universal algebra~\cite{BS81,Berg11}. Let us just recall that a polymorphism of a structure $\rstr{A}$ is compatible with every relation that is \emph{primitively positively definable} (\emph{pp--definable}, for short) from $\rstr{A}$, that is, definable from $\rstr{A}$ by a first order formula which uses only the equality, conjunction, and existential quantification.

\section{Relations of arity higher than two}

In this section we prove Theorem~\ref{thm:lower} for $m \geq 3$ and $n \geq 2$. The case $m=2$ requires a slight modification and is dealt with in the next section.

It will be convenient to slightly modify the parameters $m,n$. For the whole section we fix $n \in \N_0$ and $m \in \N$, $m \geq 2$. We will construct (Subsection~\ref{sec:structureA}) a relational structure $\rstr{A}$ with an $(n+2)$-element universe whose relations have arity at most $(m+1)$, prove
that it has no NU polymorphism of arity $m^{2^n}$ (Subsection~\ref{sec:non-existence}), and construct an NU polymorphism of arity $m^{2^n}+1$ (Subsection~\ref{sec:construction}). 

\subsection{Construction} \label{sec:structureA}

The structure $\rstr{A}$ consists of all (nonempty) unary relations and $n+1$ relations of arity $m+1$:

\[
\rstr{A}=(A;S_0,S_1,\dots,S_n,\family{X}{\emptyset\neq X\subseteq \A{n}}),
\]
where
\begin{align*}
A &= \{a,0,1, \dots,n\} \\
S_i&=\left(\{a,0,\dots,i-1\}\times \{a,i\}^m\ \setm\ \{(\underbrace{a,i,\dots,i}_{m+1})\}\right)\\
& \quad \cup \{(\underbrace{i+1,\dots,i+1}_{m+1}),\dots,(\underbrace{n,\dots,n}_{m+1})\}.
\end{align*}

\subsection{Notation and useful facts}
The following notation will be useful. 

For $i\in \N_0$ we denote
\[
A_{i+1} = \{a,0,1, \dots, i\}, \quad
\Amin{i+1}=\A{i+1}\setm\{a\}=\{0,\dots,i\},
\]
that is, $A_0 = \{a\}$ and $A = A_{n+1}$. We equip each $A_i$ with the linear order $a < 0 < 1 < \cdots <i$.

Moreover, for $i \in \{0,\ldots,n\}$, let $R_i$ be the projection of $S_i$ onto the first two coordinates. Explicitly (see Figure~\ref{fig:R}):
\[
R_i=\left(\{a,0,\dots,i-1\}\times \{a,i\}\right) \cup \{(i+1,i+1),\dots,(n,n)\}.
\]
\MyFig{ 
	\centering
	\begin{subfigure}{.5\textwidth}
		\centering
		\begin{tikzpicture}[
bod/.style={radius=\relbod},
textL/.style={left=\reltext},
teckyL/.style={left=\reltecky},
textR/.style={right=\reltext},
teckyR/.style={right=\reltecky},
edge/.style={\reledge}
]

\foreach \x/\ozn in {0/a, 1/0, 2/1, 3/t1, 4/i-1, 5/i, 6/i+1, 7/i+2, 8/t2, 9/n}
	\coordinate (l\ozn) at (0,{\x*(\relkrok)});
\foreach \x/\ozn in {0/a, 1/0, 2/1, 3/t1, 4/i-1, 5/i, 6/i+1, 7/i+2, 8/t2, 9/n}
	\coordinate (r\ozn) at ({0+\relrozestup},{\x*(\relkrok)});

\filldraw[fill=black, draw=black]
(la) circle[bod] node[textL] {$a$}
(l0) circle[bod] node[textL] {0}
(l1) circle[bod] node[textL] {1}
(lt1) node[teckyL] {$\vdots$}
(li-1) circle[bod] node[textL] {$i-1$}
(li) circle[bod] node[textL] {$i$}
(li+1) circle[bod] node[textL] {$i+1$}
(li+2) circle[bod] node[textL] {$i+2$}
(lt2) node[teckyL] {$\vdots$}
(ln) circle[bod] node[textL] {$n$};

\filldraw[fill=black, draw=black]
(ra) circle[bod] node[textR] {$a$}
(r0) circle[bod] node[textR] {0}
(r1) circle[bod] node[textR] {1}
(rt1) node[teckyR] {$\vdots$}
(ri-1) circle[bod] node[textR] {$i-1$}
(ri) circle[bod] node[textR] {$i$}
(ri+1) circle[bod] node[textR] {$i+1$}
(ri+2) circle[bod] node[textR] {$i+2$}
(rt2) node[teckyR] {$\vdots$}
(rn) circle[bod] node[textR] {$n$};

\draw[edge] (la)--(ri);
\draw[edge] (l0)--(ri);
\draw[edge] (l1)--(ri);
\draw[edge] (li-1)--(ri);

\draw[edge] (la)--(ra);
\draw[edge] (l0)--(ra);
\draw[edge] (l1)--(ra);
\draw[edge] (li-1)--(ra);

\draw[edge] (li+1)--(ri+1);
\draw[edge] (li+2)--(ri+2);
\draw[edge] (ln)--(rn);
		\end{tikzpicture}
	\caption{Relation $R_i$ on $\A{n+1}$}
	\end{subfigure}
	\begin{subfigure}{.5\textwidth}
		\centering
		\begin{tikzpicture}[
bod/.style={radius=\relbod},
textL/.style={left=\reltext},
teckyL/.style={left=\reltecky},
textR/.style={right=\reltext},
teckyR/.style={right=\reltecky},
edge/.style={\reledge}
]

\foreach \x/\ozn in {0/a, 1/0, 2/1, 3/2, 4/3}
	\coordinate (l\ozn) at (0,{\x*(\relkrok)});
\foreach \x/\ozn in {0/a, 1/0, 2/1, 3/2, 4/3}
	\coordinate (r\ozn) at ({0+\relrozestup},{\x*(\relkrok)});

\filldraw[fill=black, draw=black]
(la) circle[bod] node[textL] {$a$}
(l0) circle[bod] node[textL] {0}
(l1) circle[bod] node[textL] {1}
(l2) circle[bod] node[textL] {2}
(l3) circle[bod] node[textL] {3};

\filldraw[fill=black, draw=black]
(ra) circle[bod] node[textR] {$a$}
(r0) circle[bod] node[textR] {0}
(r1) circle[bod] node[textR] {1}
(r2) circle[bod] node[textR] {2}
(r3) circle[bod] node[textR] {3};

\draw[edge] (la)--(r2);
\draw[edge] (l0)--(r2);
\draw[edge] (l1)--(r2);

\draw[edge] (la)--(ra);
\draw[edge] (l0)--(ra);
\draw[edge] (l1)--(ra);

\draw[edge] (l3)--(r3);
		\end{tikzpicture}
	\caption{Example -- relation $R_2$ on $\A{4}$}
	\end{subfigure}
\caption{Relation $R_i$}
\label{fig:R}
} 







Finally, for $i \in \{1, \ldots, n\}$, we denote by $\A{i}|i|\dots|n$ or $a0\dots i-1|i|\dots|n$ the equivalence on $A$ with blocks
 $\{a,0,\dots,i-1\},\{i\},\{i+1\},\dots,\{n\}$. 

\begin{lemma}\label{lem:congruence}
Every polymorphism of $\rstr{A}$ is compatible with $\A{i}|i|\dots|n$ for every $i\in\{1,\dots,n\}$
\end{lemma}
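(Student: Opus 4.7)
The plan is to show that for each $i \in \{1, \ldots, n\}$ the equivalence $\A{i}|i|\dots|n$---call it $E_i$---is primitively positively definable from the signature of $\rstr{A}$; by the Galois correspondence between polymorphisms and pp--definable relations, compatibility with every polymorphism then follows automatically. I will argue by induction on $i$.

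Each $R_j$ is pp--definable as the projection of $S_j$ onto the first two coordinates (as already noted), so the converse $R_j^{-1}$ and all binary compositions of such relations are likewise pp--definable. For the base case $i = 1$, I plan to verify directly that $E_1$ coincides with the common-$R_0$-predecessor relation $\{(x,y) : \exists z\,(R_0(z,x) \wedge R_0(z,y))\}$: both elements of $\A{1} = \{a,0\}$ share $a$ as their unique $R_0$-predecessor, while every $j \geq 1$ is its own unique $R_0$-predecessor.

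For the inductive step, assuming $E_{i-1}$ is pp--definable, I will prove the composition identity
\[
E_i \;=\; E_{i-1} \circ R_{i-1}^{-1} \circ R_{i-1} \circ E_{i-1},
\]
where $\circ$ denotes relational composition (pp--definable via existential quantification). A short calculation identifies the middle factor $R_{i-1}^{-1} \circ R_{i-1}$ as $\{a,i-1\}^2 \cup \{(j,j) : j \geq i\}$---the common-$R_{i-1}$-predecessor relation. This already supplies all the singleton diagonals at indices $\geq i$, together with the ``bridge'' pair $(a, i-1)$ linking the two parts $\A{i-1}$ and $\{i-1\}$ whose union forms the new first block $\A{i} = \A{i-1} \cup \{i-1\}$. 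Composing by $E_{i-1}$ on each side---its sole nontrivial block being $\A{i-1}$---then propagates this bridge over the full square $\A{i} \times \A{i}$, while the diagonals above $i$ are left untouched because they are $E_{i-1}$-singletons.

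The main obstacle will be verifying this composition identity cleanly and without introducing spurious pairs. This reduces to a finite case analysis based on where the two intermediate existential witnesses land among the three pairwise disjoint sets $\A{i-1}$, $\{i-1\}$, and $\{i, \ldots, n\}$. The decisive structural feature is that the nontrivial part of $R_{i-1}$ is confined to the rectangle $\A{i-1} \times \{a, i-1\}$, while its contribution at indices $\geq i$ consists of pure self-loops; taken together, these facts forbid any cross-block witness configuration and guarantee that the composition produces exactly $E_i$.
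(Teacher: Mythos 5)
Your argument is correct, but it pp--defines the equivalences by a genuinely different decomposition than the paper. The paper exhibits $\A{i}|i|\dots|n$ in one shot as the single flat composition $R_0^{-1}\circ R_1^{-1}\circ\dots\circ R_{i-1}^{-1}\circ R_{i-1}\circ\dots\circ R_1\circ R_0$ and verifies it by a global walk through the composition graph (follow $r$--$r$ loops until one can jump to $a$, follow $a$--$a$ loops, jump to $s$). You instead induct on $i$ via $E_i=E_{i-1}\circ R_{i-1}^{-1}\circ R_{i-1}\circ E_{i-1}$; the two constructions agree only at $i=1$, where both reduce to $R_0^{-1}\circ R_0$. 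I checked your inductive identity: the middle factor is indeed $\{a,i-1\}^2\cup\{(j,j):j\ge i\}$, and conjugating by $E_{i-1}$ blows the pair $(a,i-1)$ up to all of $\A{i}\times\A{i}$ --- in particular it restores the pairs $(x,x)$ for $x\in\{0,\dots,i-2\}$, which are absent from the middle factor because those elements never occur as second coordinates of $R_{i-1}$ --- while leaving the singleton blocks above $i$ fixed; your stated structural facts about $R_{i-1}$ (nontrivial part confined to $\A{i-1}\times\{a,i-1\}$, identity above $i-1$) do rule out all cross--block witnesses. What each approach buys: your inductive step is more local, since each verification involves only $R_{i-1}$ and the three block types of $E_{i-1}$ and needs no picture--chasing, whereas the paper's single formula is more economical (an unfolded version of your recursion has length roughly $2^{i+1}$ against the paper's $2i$). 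Since only pp--definability matters for the Galois--correspondence conclusion, this trade--off is harmless, and both proofs finish identically by noting that each $R_j$ is itself pp--definable as a projection of $S_j$.
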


\begin{proof}
We claim that 
\begin{equation} \label{eq:cong}
\A{i}|i|\dots|n = 
R^{-1}_0 \circ R^{-1}_1 \circ \dots \circ R^{-1}_{i-1} \circ R_{i-1} \circ R_{i-2} \circ \dots \circ R_1 \circ R_0.
\end{equation}
We denote the right hand side by $\alpha$.



\MyFig{
	\centering
	\begin{tikzpicture}[
bod/.style={radius=\relbod},
textL/.style={left=\reltext},
teckyL/.style={left=\reltecky},
textR/.style={right=\reltext},
teckyR/.style={right=\reltecky},
edge/.style={\reledge}
]

\newcommand\rozestup{(\textwidth)/11}
\newcommand\vysPopisku{0.7}

\foreach \x/\ozn in {0/a, 1/0, 2/1, 3/2, 4/t1, 5/i-3, 6/i-2, 7/i-1, 8/i, 9/t2, 10/n}
	\coordinate (l\ozn) at (0,{\x*(\relkrok)});
\foreach \x/\ozn in {0/a, 1/0, 2/1, 3/2, 4/t1, 5/i-3, 6/i-2, 7/i-1, 8/i, 9/t2, 10/n}
	\coordinate (r\ozn) at ({9*(\rozestup)},{\x*(\relkrok)});
\foreach \x in {1,...,8}{
	\foreach \y in {0,...,10}
		\coordinate ({m\x\y}) at ({\x*(\rozestup)},{\y*(\relkrok)});
	\foreach \y in {0,...,3,5,6,7,8,10}
		\filldraw ({m\x\y}) circle[bod];
	\draw (m\x4) node {$\vdots$};
	\draw (m\x9) node {$\vdots$};
}

\filldraw[fill=black, draw=black]
(la) circle[bod] node[textL] {$a$}
(l0) circle[bod] node[textL] {0}
(l1) circle[bod] node[textL] {1}
(l2) circle[bod] node[textL] {2}
(lt1) node[teckyL] {$\vdots$}
(li-3) circle[bod] node[textL] {$i-3$}
(li-2) circle[bod] node[textL] {$i-2$}
(li-1) circle[bod] node[textL] {$i-1$}
(li) circle[bod] node[textL] {$i$}
(lt2) node[teckyL] {$\vdots$}
(ln) circle[bod] node[textL] {$n$};

\filldraw[fill=black, draw=black]
(ra) circle[bod] node[textR] {$a$}
(r0) circle[bod] node[textR] {0}
(r1) circle[bod] node[textR] {1}
(r2) circle[bod] node[textR] {2}
(rt1) node[teckyR] {$\vdots$}
(ri-3) circle[bod] node[textR] {$i-3$}
(ri-2) circle[bod] node[textR] {$i-2$}
(ri-1) circle[bod] node[textR] {$i-1$}
(ri) circle[bod] node[textR] {$i$}
(rt2) node[teckyR] {$\vdots$}
(rn) circle[bod] node[textR] {$n$};

\foreach \x/\popis in {0/$R_0^{-1}$, 1/$R_1^{-1}$, 2/$R_2^{-1}$, 3/$\dots$, 4/$R_{i-1}^{-1}$, 5/$R_{i-1}$, 6/$R_{i-2}$, 7/$\dots$, 8/$R_0$}
	\draw ({(\rozestup)/2 + \x*(\rozestup)},\vysPopisku) node {\popis};


\draw[edge] (la)--(m10);
\draw[edge] (l0)--(m10);

\draw[edge] (l1)--(m12);
\draw[edge] (l2)--(m13);
\draw[edge] (li-3)--(m15);
\draw[edge] (li-2)--(m16);
\draw[edge] (li-1)--(m17);
\draw[edge] (li)--(m18);
\draw[edge] (ln)--(m110);

\draw[edge] (m10)--(m20);
\draw[edge] (m10)--(m21);
\draw[edge] (m12)--(m20);
\draw[edge] (m12)--(m21);

\draw[edge] (m13)--(m23);
\draw[edge] (m15)--(m25);
\draw[edge] (m16)--(m26);
\draw[edge] (m17)--(m27);
\draw[edge] (m18)--(m28);
\draw[edge] (m110)--(m210);

\draw[edge] (m20)--(m30);
\draw[edge] (m20)--(m31);
\draw[edge] (m20)--(m32);
\draw[edge] (m23)--(m30);
\draw[edge] (m23)--(m31);
\draw[edge] (m23)--(m32);

\draw[edge] (m25)--(m35);
\draw[edge] (m26)--(m36);
\draw[edge] (m27)--(m37);
\draw[edge] (m28)--(m38);
\draw[edge] (m210)--(m310);

\draw ({(\rozestup)/2 + 3*(\rozestup)},{5*\relkrok}) node {$\dots$};

\draw[edge] (m40)--(m50);
\draw[edge] (m40)--(m51);
\draw[edge] (m40)--(m52);
\draw[edge] (m40)--(m53);
\draw[edge] (m40)--(m55);
\draw[edge] (m40)--(m56);
\draw[edge] (m47)--(m50);
\draw[edge] (m47)--(m51);
\draw[edge] (m47)--(m52);
\draw[edge] (m47)--(m53);
\draw[edge] (m47)--(m55);
\draw[edge] (m47)--(m56);

\draw[edge] (m48)--(m58);
\draw[edge] (m410)--(m510);

\draw[edge] (m50)--(m60);
\draw[edge] (m51)--(m60);
\draw[edge] (m52)--(m60);
\draw[edge] (m53)--(m60);
\draw[edge] (m55)--(m60);
\draw[edge] (m56)--(m60);
\draw[edge] (m50)--(m67);
\draw[edge] (m51)--(m67);
\draw[edge] (m52)--(m67);
\draw[edge] (m53)--(m67);
\draw[edge] (m55)--(m67);
\draw[edge] (m56)--(m67);

\draw[edge] (m58)--(m68);
\draw[edge] (m510)--(m610);

\draw[edge] (m60)--(m70);
\draw[edge] (m61)--(m70);
\draw[edge] (m62)--(m70);
\draw[edge] (m63)--(m70);
\draw[edge] (m65)--(m70);
\draw[edge] (m60)--(m76);
\draw[edge] (m61)--(m76);
\draw[edge] (m62)--(m76);
\draw[edge] (m63)--(m76);
\draw[edge] (m65)--(m76);

\draw[edge] (m67)--(m77);
\draw[edge] (m68)--(m78);
\draw[edge] (m610)--(m710);

\draw ({(\rozestup)/2 + 7*(\rozestup)},{5*\relkrok}) node {$\dots$};

\draw[edge] (m80)--(ra);
\draw[edge] (m80)--(r0);

\draw[edge] (m82)--(r1);
\draw[edge] (m83)--(r2);
\draw[edge] (m85)--(ri-3);
\draw[edge] (m86)--(ri-2);
\draw[edge] (m87)--(ri-1);
\draw[edge] (m88)--(ri);
\draw[edge] (m810)--(rn);

	\end{tikzpicture}
	\caption{Composition $R^{-1}_0 \circ R^{-1}_1 \circ \dots \circ R^{-1}_{i-1} \circ R_{i-1} \circ R_{i-2} \circ \dots \circ R_1 \circ R_0$} \label{obr:slozeni}
} 



To prove the inclusion ``$\supseteq$'', we just need to verify that $(r,r'),(r',r)\notin \alpha_i$ for every $r, r'\in \A{n+1}$, $r\geq i$, $r\neq r'$. This holds since $(r,r'')\notin R_j$ for all $r''\neq r$ and all $j\in \{0,\dots,i-1\}$.

To prove the reverse inclusion, we first check that $(r,r)\in \alpha_i$ for all $r\in \A{n+1}$, $r\geq i$. This is true since $(r,r)\in R_j$ for every $j\in \{0,\dots,i-1\}$.

Now we need to show that any pair $(r,s)$, $r,s \in \{a,0,\dots,i-1\}$, is an element of $\alpha$.
This is best seen in the picture:  we go through the graph in Figure \ref{obr:slozeni} using $r\textnormal{--}r$ edges until we get to the point where we can ``jump'' through an $t\textnormal{--}a$ edge to $a$. Then we go through $a\textnormal{--}a$ edges until we can ``jump'' to $s$ using $a\textnormal{--}s$ edge. Then we go to the end through $s\textnormal{--}s$ edges.


Equation~\eqref{eq:cong} implies that $\A{i}|i|\dots|n$ is pp-definable from $R_i's$, which are pp-definable from $\rstr{A}$, and the claim follows.
\end{proof}

For a set $X$ we write $(X,\dots,X)$ to mean ``any vector $(x_1,\dots,x_l)$ such that $x_1,\dots,x_l\in X$''. For example, 
\[
t(\underbrace{\A{i},\dots,\A{i}}_l,i+1,\dots,i+1,\dots,n,\dots,n)\neq a,
\]
means\[
t(x_1,\dots,x_l,i+1,\dots,i+1,\dots,n,\dots,n)\neq a,
\]
for any $x_1,\dots,x_l\in \A{i}$.

The compatibility with $S_i$ will be used as follows.

\begin{lemma} \label{lem:comp}
If $t$ is a polymorphism of $\rstr{A}$ and $i \in \{0, \dots, n\}$, then 
\[
t(\underbrace{a,\dots,a}_{m\cdot l_a},\underbrace{\Amin{i},\dots,\Amin{i}}_{l_i},\underbrace{i+1,\dots,i+1}_{l_{i+1}},\dots,\underbrace{n,\dots,n}_{l_n})\neq a,
\]
whenever
\[
\left. \begin{split}
t(a,\dots,a,i,\dots,i,\dots,i,\dots,i,i,\dots,i,i+1,\dots,i+1,\dots,n,\dots,n)&=i \\
t(i,\dots,i,a,\dots,a,\dots,i,\dots,i,i,\dots,i,i+1,\dots,i+1,\dots,n,\dots,n)&=i \\
\vdots \hspace{11cm} \vdots \\
t(\underbrace{i,\dots,i}_{l_a},\underbrace{i,\dots,i}_{l_a},\dots,\underbrace{a,\dots,a}_{l_a},\underbrace{i,\dots,i}_{l_i},\underbrace{i+1,\dots,i+1}_{l_{i+1}},\dots,\underbrace{n,\dots,n}_{l_n})&= i  \\
\end{split} \right\} m
\]
for arbitrary $l_a,l_i,\dots,l_n\in \N_0$ such that $m\cdot l_a+l_i+\dots+l_n=k$. (Lemma also holds for any permutation of arguments, the same for every row.)
\end{lemma}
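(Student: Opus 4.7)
The plan is to apply compatibility of $t$ with the $(m+1)$-ary relation $S_i$ to a suitably chosen matrix whose first row is the target argument of the conclusion and whose remaining $m$ rows are the arguments of the $m$ equations in the hypothesis. Concretely, set $k = m l_a + l_i + l_{i+1} + \cdots + l_n$, fix arbitrary $x_1, \dots, x_{l_i} \in \Amin{i}$, and form the $(m+1) \times k$ matrix $M$ whose first row is
\[
(\underbrace{a, \dots, a}_{m l_a}, x_1, \dots, x_{l_i}, \underbrace{i+1, \dots, i+1}_{l_{i+1}}, \dots, \underbrace{n, \dots, n}_{l_n})
\]
and whose rows $2, \dots, m+1$ are the $m$ argument tuples from the hypothesis. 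Writing $y$ for the value of $t$ on the first row, the hypothesis gives $t(\mathrm{row}\ j) = i$ for $j = 2, \dots, m+1$, so once every column of $M$ is verified to lie in $S_i$, compatibility of $t$ with $S_i$ forces the tuple $(y, i, i, \dots, i)$ to belong to $S_i$.

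The main work --- and the one place that demands care --- is the column-check, which splits into three cases. (a) A column in the initial $a$-block carries $a$ in the first row, $a$ in exactly one of the rows $2, \dots, m+1$, and $i$ in the remaining rows; it lies in $S_i$ because its first coordinate $a$ is in $\{a, 0, \dots, i-1\}$, the other coordinates lie in $\{a, i\}$, and the second $a$ (outside the first coordinate) rules out the forbidden tuple $(a, i, \dots, i)$. This is the structural reason why the hypothesis provides exactly $m$ rows whose $a$-blocks tile the initial $a$-block of the target: each column of that block is then guaranteed a ``second $a$'' in some hypothesis row. (b) A column in the $\Amin{i}$-block equals $(x, i, \dots, i)$ for some $x \in \Amin{i}$, and lies in $S_i$ by the same argument, with $x \neq a$ making the exclusion automatic. (c) A column in a $j$-block with $j \in \{i+1, \dots, n\}$ is the constant tuple $(j, \dots, j)$, listed in the second part of $S_i$.

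It remains to determine which tuples of the form $(y, i, i, \dots, i)$ can lie in $S_i$: the forbidden tuple excludes $y = a$; the constant-tuple part of $S_i$ forbids $y = i$ and $y \in \{i+1, \dots, n\}$; and the first part of $S_i$ permits precisely $y \in \{0, \dots, i-1\} = \Amin{i}$. Hence $y \neq a$, as required. The addendum about permutations of arguments is immediate: applying any permutation uniformly to the columns of $M$ preserves column-membership in $S_i$ and changes neither $y$ nor the hypothesis values.
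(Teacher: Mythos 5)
Your proof is correct and follows the same route as the paper: both assemble the $(m+1)\times k$ matrix whose first row is the target tuple and whose remaining rows are the hypothesis tuples, verify that every column lies in $S_i$, and conclude from compatibility that the resulting tuple $(y,i,\dots,i)$ cannot be the excluded tuple $(a,i,\dots,i)$. Your write-up is merely more explicit about the three column types and about which values of $y$ the two parts of $S_i$ admit.
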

\begin{proof}
The ``columns''
\begin{align*}
&(a,a,i,\dots,i),(a,i,a,i,\dots,i),\dots,(a,i,\dots,i,a),(\Amin{i},i,\dots,i), \\
&(i+1,\dots,i+1),\dots,(n,\dots,n)
\end{align*}
are in $S_i$. 
Since $t$ is compatible with $S_i$, then $t$ applied to the columns  has to be in $S_i$ as well, i.e., it can not be $(a,i,\dots,i)$. Hence\[
t(\underbrace{a,\dots,a}_{m\cdot l_a},\Amin{i},\dots,\Amin{i},i+1,\dots,i+1,\dots,n,\dots,n)\neq a.
\]
\end{proof}

\subsection{Example} \label{subsec:example}

We will show in the next subsection that $\rstr{A}$ admits no NU polymorphism of arity $m^{2^n}$ (and thus no NU polymorphism of smaller arity).
Since the formal proof is a bit technical, we first illustrate the idea in the case $n=m=3$, thus
\[
\A{4}=\{a,0,1,2,3\},\; \rstrA{4}{4}=(\A{4};S_0,S_1,S_2,S_3,\family{D_X}{X\subseteq \A{4}}),
\]
$S_i$ are $4$-ary. 

Let $t$ be an NU polymorphism of $\rstr{A}$ of arity $3^{2^3}=3^8$. Step by step we show that $t(a,\dots,a)\neq a$, which is a contradiction since $t$ should preserve the unary relation $\{a\}$. In each step we  choose a suitable tuple $\vc{v}_k$ so that  $t(\vc{v}_{k-1})\neq a$ implies $t(\vc{v}_k)\neq a$ and the number of initial $a$'s is 3 (=$m$) times larger than in $\vc{v}_{k-1}$. The choice of these vectors in the general situation is described in Definition \ref{def:vectors}.

Since $t$ is an NU term, we have
\begin{align*}
t(a,3,3,3,\dots,3)&=3,\\
t(3,a,3,3,\dots,3)&=3,\\
t(3,3,a,\underbrace{3,\dots,3}_{3^8-3})&=3.\\
\end{align*}
By Lemma \ref{lem:comp}  (with $i=3$, $l_a=1$, $l_3=3^8-3$) we get\[
t(a,a,a,\Amin{3},\dots,\Amin{3})\neq a.
\]
By the same argument, this inequality also holds for any permutation of the arguments.

In particular, we get
	\begin{equation}\label{pr:krok0}
t(\vc{v}_0)=t(\underbrace{a,a,a}_3,\underbrace{0,0,0,0,0,0}_{3^{2}-3},\underbrace{1,\dots,1}_{3^4-3^2},\underbrace{2,\dots,2}_{3^8-3^4})\neq a
	\end{equation}
and similarly for any permutation of the arguments.

We now want to show that
	\begin{equation}\label{pr:krok1}
t(\vc{v}_1)=t(\underbrace{a,\dots,a}_{3^2},\underbrace{1,\dots,1}_{3^4-3^2},\underbrace{2,\dots,2}_{3^8-3^4})\neq a.
	\end{equation}
Since $t$ is compatible with the unary relation ${\{a,0,1,2\}}$ and $t(\vc{v}_0)\neq a$, then $t(\vc{v}_0)\in\{0,1,2\}$. If $t(\vc{v}_0)\in\{1,2\}$ then also $t(\vc{v}_1)\in\{1,2\}$, because $t$ is compatible with ${a0|1|2|3}$ by Lemma \ref{lem:congruence}. In that case the inequality \eqref{pr:krok1} obviously holds.

Otherwise we have $t(\vc{v}_0)=0$. In that case we use the compatibility with  ${a0|1|2|3}$ to obtain
	\begin{align*}
t(\underbrace{\A{1},\dots,\A{1}}_{3^2},{\underbrace{1,\dots,1}_{3^4-3^2}},{\underbrace{2,\dots,2}_{3^8-3^4}})&\in \A{1}=\{a,0\}
	\end{align*}
and thus, by (\ref{pr:krok0}) for suitable permutations, we get
	\begin{align*}
t(a,a,a,0,0,0,0,0,0,1,\dots,1,2,\dots,2)&=0, \\
t(0,0,0,a,a,a,0,0,0,1,\dots,1,2,\dots,2)&=0, \\
t(0,0,0,0,0,0,a,a,a,\underbrace{1,\dots,1}_{3^4-3^2},\underbrace{2,\dots,2}_{3^8-3^4})&=0.
	\end{align*}
Inequality \eqref{pr:krok1} now follows from Lemma~\ref{lem:comp} (with $i=0$, $l_a=3$, $l_0=0$, $l_1=3^4-3^2$, $l_2=3^8-3^4$, $l_3=0$). 

The next step is to show
	\begin{equation}\label{pr:krok2}
t(\vc{v}_2)=t(\underbrace{a,\dots,a}_{3^3},\underbrace{0,\dots,0}_{3^4-3^3},\underbrace{2,\dots,2}_{3^8-3^4})\neq a.
	\end{equation}
We prove this similarly as before using inequality (\ref{pr:krok1}). We already now that $t(\vc{v}_1)\in \{1,2\}$. If $t(\vc{v}_1)=2$, then $t(\vc{v}_2)=2$ by the compatibility with  ${a01|2|3}$, and~(\ref{pr:krok2}) is again immediate. For the case $t(\vc{v}_1)=1$ we use the compatibility with ${a01|2|3}$, compatibility of $t$ with ${\{a,1,2\}}$ and inequality (\ref{pr:krok1}) for suitable permutations of the arguments to get
	\begin{align*}
t(a,\dots,a,1,\dots,1,1,\dots,1,1,\dots,1,2,\dots,2)&=1, \\
t(1,\dots,1,a,\dots,a,1,\dots,1,1,\dots,1,2,\dots,2)&=1, \\
t({\underbrace{1,\dots,1}_{3^2}},{\underbrace{1,\dots,1}_{3^2}},{\underbrace{a,\dots,a}_{3^2}},{\underbrace{1,\dots,1}_{3^4-3\cdot3^2}},{\underbrace{2,\dots,2}_{3^8-3^4}})&=1.
	\end{align*}
Inequality (\ref{pr:krok2}) now follows from Lemma \ref{lem:comp}.

In a similar way we gradually prove the following inequalities:
	\begin{align}
t(\underbrace{a,\dots,a}_{3^4},\underbrace{2,\dots,2}_{3^8-3^4})&\neq a, \\
t(\underbrace{a,\dots,a}_{3^5},\underbrace{0,\dots,0}_{3^6-3^5},\underbrace{1,\dots,1}_{3^8-3^6})&\neq a, \\
t(\underbrace{a,\dots,a}_{3^6},\underbrace{1,\dots,1}_{3^8-3^6})&\neq a, \\
t(\underbrace{a,\dots,a}_{3^7},\underbrace{0,\dots,0}_{3^8-3^7})&\neq a, \\
t(\underbrace{a,\dots,a}_{3^8})&\neq a. \label{pr:krok7}
	\end{align}
Inequality (\ref{pr:krok7}) gives us a contradiction.

\subsection{Non-existence of NU polymorphisms of low arity} \label{sec:non-existence}

We now proof that $\rstr{A}$ does not admit an NU polymorphism $t$ of arity $m^{2^n}$ in general. For a contradiction, assume that $t$ is such a polymorphism. Note that the case $n=0$, $m=2$ is not interesting, so let us assume that $n >0$ or $m>2$. 





We first introduce a useful notation.
Let $x_1,\dots,x_k\in A$, $\vc{x}=(x_1,\dots,x_k)$ and let $i\in A$. We will denote by $\val{i}{\vc{x}}$ the number of occurrences of the element~$i$ in the tuple~$\vc{x}$ and by~$\less{i}{\vc{x}}$ the number of occurrences of the elements that are strictly less than~$i$ in~$\vc{x}$. Here we also allow $i=n+1$ to denote $\less{n+1}{\vc{x}}=k$.

For $b_0,\dots,b_l\in \{0,1\}, l\in \N$, we denote by ${\overline{b_l\dots b_0}}$ the integer with binary representation ${b_l\dots b_0}$.

\begin{definition} \label{def:vectors}
Let $k\in \{0,1,\dots,2^n-1\}$ be an integer with binary representation $b_{n-1}b_{n-2}\dots b_1b_0$. For $i\in \{a,0,\dots,n-1\}$ we define \[
\inum{k}{i}=
	\begin{cases}
m^{k+1}, & \text{for $i=a$}, \\
(1-b_i)\cdot m^{\overline{b_{n-1}\dots b_{i+1}0\dots 0}+2^i} \cdot (m^{2^i}-1), & \text{for $i\in \{0,\dots,n-1\}$},
	\end{cases}
\]
where there are $i+1$ zeros after $b_{i+1}$ in the exponent. 

Moreover, we put \[
\ivect{k}=(\underbrace{a,\dots,a}_{\inum{k}{a}},\underbrace{0,\dots,0}_{\inum{k}{0}},\dots,\underbrace{n-1,\dots,n-1}_{\inum{k}{n-1}}).
\]
\end{definition}

For instance, \[
\ivect{0}=(\underbrace{a,\dots,a}_{m},\underbrace{0,\dots,0}_{m\cdot(m-1)},\underbrace{1,\dots,1}_{m^2\cdot(m^2-1)},\dots,\underbrace{i,\dots,i}_{\mathclap{m^{2^i}\cdot(m^{2^i}-1)}},\dots,\underbrace{n-1,\dots,n-1}_{m^{2^{n-1}}\cdot(m^{2^{n-1}}-1)}).
\]
Also note that $\val{i}{\ivect{k}}=\inum{k}{i}$.

We will need the following technical lemma.

\begin{lemma}\label{lem:i1}
Let $k\in \{0,1,\dots,2^n-1\}$ be an integer with binary representation ${b_{n-1}b_{n-2}\dots b_1b_0}$. Let $i\in \{0,\dots,n-1\}$ be the least index such that $b_i=0$, thus $k=\overline{b_{n-1}\dots b_{i+1} 0 \underbrace{1 \dots 1}_i}$ and $k+1=\overline{b_{n-1}\dots b_{i+1} 1 \underbrace{0 \dots 0}_{i}}$. Then
	\begin{enumerate}[a)]
\item \label{lem:i1a}
$\inum{k}{j}=0$\ \ for every $j\in \{0,1,\dots,i-1\}$,

\item \label{lem:i1b}
$\inum{k}{i}=m^{k+1}\cdot(m^{2^i}-1)$,

$\less{i+1}{\ivect{k}}=\inum{k}{a}+\inum{k}{i}=m^{k+1+2^i}$,

\item \label{lem:i1c}
$\inum{k+1}{i}=0$,

$\inum{k+1}{j}=\inum{k}{j}$\ \ for every $j\in \{i+1,\dots,n-1\}$,

\item \label{lem:i1d}
$\less{i}{\ivect{k+1}}=\inum{k+1}{a}+\inum{k+1}{0}+\dots+\inum{k+1}{i-1}=m^{k+1+2^i}$.
	\end{enumerate}
\end{lemma}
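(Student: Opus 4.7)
The proof is a direct unwinding of Definition \ref{def:vectors}, exploiting the fact that under the hypothesis on $i$ the binary representations are highly constrained: $k = \overline{b_{n-1}\dots b_{i+1}\,0\,\underbrace{1\dots1}_{i}}$ and $k+1 = \overline{b_{n-1}\dots b_{i+1}\,1\,\underbrace{0\dots0}_{i}}$, and these agree in positions $i+1,\dots,n-1$. Parts (a), (b), (c) amount to careful bookkeeping of which bits are $0$ and which are $1$, while part (d) hides a short telescoping computation.

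For part (a), the minimality of $i$ forces $b_j = 1$ for every $j < i$, so the factor $(1-b_j)$ in the definition of $\inum{k}{j}$ kills the expression. For part (b), $(1-b_i) = 1$, and the identity
\[
\overline{b_{n-1}\dots b_{i+1}\,\underbrace{0\dots 0}_{i+1}} = k+1-2^i
\]
(valid since $k+1$ shares the high bits of $k$ and carries a $1$ in position $i$) yields $\inum{k}{i} = m^{(k+1-2^i)+2^i}(m^{2^i}-1) = m^{k+1}(m^{2^i}-1)$. Combining with $\inum{k}{a} = m^{k+1}$ and part (a) then gives $\less{i+1}{\ivect{k}} = m^{k+1}\cdot m^{2^i} = m^{k+1+2^i}$. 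Part (c) is analogous: the $i$-th bit of $k+1$ equals $1$, forcing $\inum{k+1}{i} = 0$, and for $j > i$ the bits of $k$ and $k+1$ entering the formulas for $\inum{k}{j}$ and $\inum{k+1}{j}$ coincide.

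The main step is part (d). For each $j \in \{0,\dots,i-1\}$ the $j$-th bit of $k+1$ is $0$, so $\inum{k+1}{j}$ is nonzero, and the crucial observation is that
\[
\overline{(k+1)_{\geq j+1}\,\underbrace{0\dots 0}_{j+1}} = k+1,
\]
because the bits of $k+1$ in positions $j+1,\dots,i-1$ vanish while the position-$i$ bit is $1$. Hence
\[
\inum{k+1}{j} = m^{(k+1)+2^j}(m^{2^j}-1) = m^{k+1+2^{j+1}} - m^{k+1+2^j},
\]
and summing over $j \in \{0,\dots,i-1\}$ together with $\inum{k+1}{a} = m^{k+2}$ yields a telescoping sum that collapses to $m^{k+1+2^i}$. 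The only pitfall I anticipate is keeping careful track of the number of trailing zeros in the exponent formula; once the ``$i+1$ zeros after $b_{i+1}$'' convention from Definition \ref{def:vectors} is applied consistently, every step reduces to elementary arithmetic with powers of $m$.
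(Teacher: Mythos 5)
Your proposal is correct and follows essentially the same route as the paper's proof: parts (a)--(c) by reading off the bits from the definition, part (b) via the identity $\overline{b_{n-1}\dots b_{i+1}\underbrace{0\dots0}_{i+1}}=k+1-2^i$, and part (d) via the same telescoping sum. The only nitpick is in (d): the reason zeroing out positions $0,\dots,j$ of $k+1$ leaves $k+1$ unchanged is that those positions are already zero (which your remark about the vanishing low bits of $k+1$ does imply, just stated for a slightly different range).
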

\begin{proof}
	\begin{enumerate}[a)]
\item Follows from the definitions.

\item By the assumption, we have
\begin{align*}
k&=\overline{b_{n-1}\dots b_{i+1}0\underbrace{1\dots1}_{i}}=\overline{b_{n-1}\dots b_{i+1}0\underbrace{0\dots0}_{i}}+\overline{0\dots0\underbrace{1\dots1}_{i}}= \\
&=\overline{b_{n-1}\dots b_{i+1}\underbrace{0\dots0}_{i+1}}+2^i-1.
\end{align*}

Then, by the definition,
\[
\inum{k}{i}=(1-b_i)\cdot m^{\overline{b_{n-1}\dots b_{i+1}0\dots 0}+2^i} \cdot (m^{2^i}-1)=m^{k+1}\cdot(m^{2^i}-1).
\]
The second part follows from \ref{lem:i1a}) and an easy calculation \[\inum{k}{a}+\inum{k}{i}=m^{k+1}+m^{k+1}\cdot(m^{2^i}-1)=m^{k+1+2^i}.
\]

\item Follows from the definition.

\item Since $k+1=\overline{b_{n-1}\dots b_{i+1} 1 \underbrace{0 \dots 0}_{i}}$, 
then
		\begin{multline*}
\inum{k+1}{a}+\inum{k+1}{0}+\dots+\inum{k+1}{i-1}=\\
=m^{k+1+1}+m^{k+1+2^0}\cdot(m-1)+m^{k+1+2^1}\cdot(m^2-1)+\dots+m^{k+1+2^{i-1}}\cdot(m^{2^{i-1}}-1)=\\
=m^{k+2}+m^{k+3}-m^{k+2}+m^{k+5}-m^{k+3}+\dots+m^{k+1+2^{i-1}+2^{i-1}}-m^{k+1+2^{i-1}}=\\
=m^{k+1+2^{i}}.
		\end{multline*}
	\end{enumerate}
\end{proof}

For $k\in \{0,1,\dots,2^n-1\}$ and any permutation $\sigma$ of $\{1,2,\dots,m^{2^n}\}$ we will now prove by induction on $k$ that \[
t(\sigma(\ivect{k}))\neq a,
\]
where by $\sigma((x_1,x_2,\dots,x_{m^{2^n}}))$ we mean the tuple $(x_{\sigma(1)},x_{\sigma(2)},\dots,x_{\sigma(m^{2^n})})$. Without loss of generality we will prove the claim only for the case $\sigma=\id$ -- for a general permutation $\sigma$ the induction step can be repeated with all the arguments permuted by $\sigma$.

The base case $k=0$ is a consequence of $t$ being NU. We have
\begin{align*}
t(a,n,n,n,\dots,n)&=n\\
t(n,a,n,n,\dots,n)&=n\\
\vdots\\
t(\underbrace{n,n,\dots,n,a}_{m},n,\dots,n)&=n.
\end{align*}
By Lemma \ref{lem:comp} with $i=n$,
\[
t(\underbrace{a,\dots,a}_{m},\Amin{n},\dots,\Amin{n})\neq a.
\]
In particular, \[
t(\ivect{0})\neq a.
\]


We will now prove the induction step. Let $k\in \{0,1,\dots,2^n-2\}$ and suppose  that $t(\sigma(\ivect{k}))\neq a$ for every permutation $\sigma$. We will show that $t(\ivect{k+1})\neq a$.

Let $b_{n-1}\dots b_0$ be the binary representation of $k$ and $i\in \{0,\dots,n-1\}$ be the least index such that $b_i=0$. Then, by Lemma \ref{lem:i1},
\begin{align*}
t(&\ivect{k})=t(\underbrace{\overbrace{a,\dots,a}^{m^{k+1}},i,\dots,i}_{m^{k+1+2^i}},\underbrace{i+1,\dots,i+1}_{\inum{k}{i+1}},\dots,\underbrace{n-1,\dots,n-1}_{\inum{k}{n-1}}),\\
t(&\ivect{k+1})=\\
&t(\underbrace{\overbrace{a,\dots,a}^{m^{k+2}},0,\dots,0,\dots,i-1,\dots,i-1}_{m^{k+1+2^i}},\underbrace{i+1,\dots,i+1}_{\inum{k}{i+1}},\dots,\underbrace{n-1,\dots,n-1}_{\inum{k}{n-1}}).
\end{align*}
As $t$ is compatible with the unary relation ${\{a,i,i+1,\dots,n-1\}}$, we get $t(\ivect{k})\in{\{a,i,i+1,\dots,n-1\}}$. We distinguish two cases.

If $t(\ivect{k})=j\in{\{i+1,\dots,n-1\}}$, then $t(\ivect{k+1})=j$ since $a01\dots i|i+1|\dots|n$ is compatible with $t$ (see Lemma~\ref{lem:congruence}). In particular, $t(\ivect{k+1})\neq a$.

Otherwise $t(\ivect{k})\in{\{a,i\}}$. The compatibility with $a01\dots i|i+1|\dots|n$ and ${\{a,i,i+1,\dots,n-1\}}$ yields
\[
t(\mu(\underbrace{\overbrace{a,\dots,a}^{m^{k+1}},i,\dots,i}_{m^{k+1+2^i}}),\underbrace{i+1,\dots,i+1}_{\inum{k}{i+1}},\dots,\underbrace{n-1,\dots,n-1}_{\inum{k}{n-1}})\in{\{a,i\}}
\]
for any permutation $\mu$. By the induction hypothesis, $t(\sigma(\ivect{k}))\neq a$ for any permutation $\sigma$, and so\[
t(\mu(\underbrace{\overbrace{a,\dots,a}^{m^{k+1}},i,\dots,i}_{m^{k+1+2^i}}),\underbrace{i+1,\dots,i+1}_{\inum{k}{i+1}},\dots,\underbrace{n-1,\dots,n-1}_{\inum{k}{n-1}})=i
\]
for any $\mu$.
Lemma \ref{lem:comp} now yields \[
t(\underbrace{\overbrace{a,\dots,a}^{m\cdot m^{k+1}},\Amin{i},\dots,\Amin{i}}_{m^{k+1+2^i}},\underbrace{i+1,\dots,i+1}_{\inum{k}{i+1}},\dots,\underbrace{n-1,\dots,n-1}_{\inum{k}{n-1}})\neq a,
\]
so, in particular, $t(\ivect{k+1})\neq a$, which is our claim.

We have shown that $t(\ivect{k})\neq a$ for every $k\in{\{0,1,\dots,2^n-1\}}$, in particular \[
t(\ivect{2^n-1})=t(\underbrace{a,\dots,a}_{m^{2^n}})\neq a.
\]
This contradicts the fact that $t$ preserves $\{a\}$ and finishes the proof.

\begin{remark}
The proof actually shows that $\rstr{A}$ has no polymorphism $t$ of arity $m^{2^n}$ such that $t(a,n,\dots,n) = t(n,a,n,\dots,n) = \cdots = t(n, \dots, n,a)=n$.
\end{remark}

\subsection{Construction of an NU polymorphism} \label{sec:construction}

In this subsection we construct an NU polymorphism $f$ of the structure $\rstr{A}$ of arity $m^{2^n}+1$. The construction here coincides with the one in~\cite{Zh14}.

The operation $f$ is defined by
\[
f(\vc{x})=
\begin{cases}
n, & \text{if $m^{2^{n}}+1>m^{2^{n}}\cdot\less{n}{\vc{x}}$},\\
n-1, & \text{else if $\less{n}{\vc{x}}>m^{2^{n-1}}\cdot\less{n-1}{\vc{x}}$},\\
\vdots \\
r, & \text{else if $\less{r+1}{\vc{x}}>m^{2^{r}}\cdot\less{r}{\vc{x}}$},\\
\vdots \\
0, & \text{else if $\less{1}{\vc{x}}>m\cdot\less{0}{\vc{x}}$},\\
a, & \text{else},
\end{cases}
\]
where we use the same notation $\less{r}{\vc{x}}$ as in the previous subsection. 
A more compact way to define $f$ is by setting
\[
f(\vc{x})=
\begin{cases}
\max{F_{\vc{x}}}, & \text{if $F_{\vc{x}}\neq\nic$},\\
a, & \text{otherwise},
\end{cases}
\]
where
\[
F_{\vc{x}}=\{r\in\{0,\dots,n\}: \less{r+1}{\vc{x}}>m^{2^{r}}\cdot \less{r}{\vc{x}}\}.
\]
Note that the result of $f$ does not depend on the order of arguments, only on the number of occurrences of each element among the arguments.

We need to show that $f$ is an NU operation, which is compatible with the relations $S_0,\dots,S_n$ and each unary relation $X \subseteq A$.

First we show that $f$ is compatible with $X$. Let $\vc{x}\in A^{m^{2^{n}}+1}$ be a~tuple and $r$ be an element not in $\vc{x}$. If $r\in\{0,\dots,n\}$, then $\less{r+1}{\vc{x}}=\less{r}{\vc{x}}$, hence $\less{r+1}{\vc{x}}\leq m^{2^{r}}\cdot\less{r}{\vc{x}}$, and therefore $f(\vc{x})\neq r$. If $r=a$, then there exists the least $s\in \{1,\dots,n+1\}$ such that $\less{s}{\vc{x}}\neq 0$. So $\less{s}{\vc{x}}>m^{2^{s-1}}\cdot\less{s-1}{\vc{x}}=0$ and $f(\vc{x})\neq a$.

Next we verify that $f$ is an NU operation. We already know that $f(r,r,\dots, r)=r$ since it is compatible with ${\{r\}}$ for any $r\in A$. 
Let $s,t\in A$, ${s\neq t}$, and $\vc{x}=(s,\dots,s,t,s,\dots,s)$, where $t$ is at any position in $\vc{x}$. Compatibility of~$f$ with $\{s,t\}$ yields $f(\vc{x})\in{\{s,t\}}$. If $s>t$, then $s\neq a$ and \[
\less{s+1}{\vc{x}}=m^{2^n}+1>m^{2^s}=m^{2^s}\cdot\less{s}{\vc{x}},
\] hence $f(\vc{x})=s$. If $s<t$, then $t\neq a$ and \[
\less{t+1}{\vc{x}}=m^{2^{n}}+1<m^{2^{t}}\cdot m^{2^{n}}=m^{2^{t}}\cdot \less{t}{\vc{x}},
\]
hence $f(\vc{x})\neq t$ and, again, $f(\vc{x})=s$.

It remains to prove that $f$ is compatible with $S_k$ for every $k\in {\{0,\dots,n\}}$. To simplify the notation, let $l={m^{2^{n}}+1}$ denote the arity of $f$. Let $k\in {\{0,\dots,n\}}$ and $M$ be a matrix $(m+1)\times (m^{2^{n}}+1)$ of elements in $A$ such that $M_{\hv\,1}, M_{\hv\,2},\dots,M_{\hv\,l}\in S_k$, i.e. every column of $M$ is in $S_k$. Let us denote ${\vc{v}=(f(M_{1\,\hv}),\dots,f(M_{(m+1)\,\hv}))}$. We claim that $\vc{v} \in S_k$.
\[
	\begin{matrix}
 & & & & & \vc{v} \\
 & & & & & \raisebox{\depth}{\rotatebox{270}{$=$}} \\
m_{1\,1} & m_{1\,2} & \dots & m_{1\,l} & \overset{f}{\rightarrow} & f(M_{1\,\hv}) \\
m_{2\,1} & m_{2\,2} & \dots & m_{2\,l} & \overset{f}{\rightarrow} & f(M_{2\,\hv}) \\
\vdots & \vdots  &  & \vdots & \vdots & \vdots \\
m_{(m+1)\,1} & m_{(m+1)\,2} & \dots & m_{(m+1)\,l} & \overset{f}{\rightarrow} & f(M_{(m+1)\,\hv}) \\
\raisebox{\depth}{\rotatebox{270}{$\in$}} & \raisebox{\depth}{\rotatebox{270}{$\in$}} & & \raisebox{\depth}{\rotatebox{270}{$\in$}} & & \raisebox{\depth}{\rotatebox{270}{$\overset{\rotatebox{90}{?}}{\in}$}} \\
S_k & S_k & \dots & S_k & & S_k
	\end{matrix}
\]

Since the columns of M are in $S_k$, there is no $k$ in the first row of M. By compatibility with the unary relations we get also $f(M_{1\,\hv})\neq k$. In the other rows there are elements from ${\{a,k,k+1,\dots,n\}}$ and, again by compatibility with the unary relations, $f(M_{i\,\hv})$ is in the same set for every $i\in{\{2,\dots,m+1\}}$. If there is some $u>k$  in a column, then there is $u$ at all coordinates of that column. Hence there is the same number of $u$'s in every row for every $u>k$. This yields
\begin{equation} \label{kompS:01}
\less{u}{M_{1\,\hv}}=\less{u}{M_{2\,\hv}}=\dots=\less{u}{M_{(m+1)\,\hv}}
\end{equation}
for all $u\in{\{k+1,\dots,n+1\}}$.

It follows immediately that if the result of any row is $f(M_{i\,\hv})=u>k$, then $f(M_{1\,\hv})=\dots=f(M_{(m+1)\,\hv})=u$ and hence $\vc{v}\in S_k$.

Otherwise, we have $f(M_{i\,\hv})\leq k$ for every $i\in {\{1,\dots,m+1\}}$. In this case  $\vc{v}\in S_k$ except for the case $\vc{v}=(a,k,\dots,k)$. We will show that this, however, can never happen.

For a contradiction, suppose $f(M_{1\,\hv})=a$ and $f(M_{i\,\hv})=k$ for every $i\in{\{2,\dots,m+1\}}$. From $f(M_{1\,\hv})=a$, we get
\begin{multline*} 
\less{k}{M_{1\,\hv}} \leq m^{2^{k-1}}\cdot\less{k-1}{M_{1\,\hv}} \leq m^{2^{k-1}}\cdot m^{2^{k-2}}\cdot\less{k-2}{M_{1\,\hv}} \leq \dots \leq\\
\leq m^{2^{k-1}}\cdot m^{2^{k-2}}\cdot \dots \cdot m^{2^0}\cdot \less{0}{M_{1\,\hv}} = m^{2^{k-1}+2^{k-2}+\dots+2^0}\cdot \val{a}{M_{1\,\hv}} = \\
= m^{2^{k}-1}\cdot \val{a}{M_{1\,\hv}}.
\end{multline*}
Since there is no $k$ in the first row of $M$, it follows from (\ref{kompS:01}) that \[
\less{k+1}{M_{i\,\hv}}=\less{k+1}{M_{1\,\hv}}=\less{k}{M_{1\,\hv}}.
\]
From $f(M_{i\,\hv})=k$, we get \[
m^{2^k} \cdot \val{a}{M_{i\,\hv}} = m^{2^k} \cdot \less{k}{M_{i\,\hv}} < \less{k+1}{M_{i\,\hv}}
\]
for every $i\in{\{2,\dots,m+1\}}$.

Putting the previous three equations together we obtain \[
m^{2^{k}}\cdot \val{a}{M_{i\,\hv}}<m^{2^{k}-1}\cdot \val{a}{M_{1\,\hv}},
\]
which simplifies to
\begin{equation} \label{kompS:04}
m\cdot \val{a}{M_{i\,\hv}}<\val{a}{M_{1\,\hv}}
\end{equation}
for every $i\in {\{2,\dots,m+1\}}$. Therefore,
\begin{equation} \label{kompS:03}
\sum_{i=2}^{m+1}{\val{a}{M_{i\,\hv}}}<\val{a}{M_{1\,\hv}}.
\end{equation}

On the other hand, if we have the element $a$ in the first coordinate of a column, there must be $a$ also in some other coordinate of that column, otherwise the column would be $(a,k,\dots,k)\notin S_k$. Therefore, there is at least the same number of~occurrences of~$a$ in the first line as there is in the other lines combined, i.e. \[
\sum_{i=2}^{m+1}{\val{a}{M_{i\,\hv}}}\geq \val{a}{M_{1\,\hv}},
\]
which contradicts the inequality (\ref{kompS:03}). This proves our claim that $\vc{v}\in S_k$.

We have shown that $f$ is compatible with the relation $S_k$ for every $k\in\{0,\dots,n\}$, hence it is, indeed, an~NU~polymorphism of $\rstr{A}$ of arity $m^{2^n}+1$.

\section{Binary relations}

In this section we prove the lower bounds in Theorem~\ref{thm:lower} for $m=2$ and $n \geq 3$.

Similarly to the previous section, it will be convenient to change the parametrization: we fix $n \in \N_0$ and we will construct a relational structure $\rstr{B}$ with universe of size $n+3$ and relations of arity at most 2, which admits an NU polymorphism of arity $2^{2^n}+1$ and no NU polymorphism of smaller arity.

We remark that a folklore reduction (see, e.g., ~\cite[Proposition 3.1]{Bar13}) would also produce binary structures with quite large minimal arity of an NU polymorphism, but this construction is insufficient to match the lower bound $2^{2^n}$.

\subsection{Construction}

The argument illustrated in Subsection \ref{subsec:example} does not work for binary structures, because the number of $a$'s in the vectors $\vc{v}_k$ would not grow at all. This issue can be fixed by introducing additional elements that behave similarly to $a$. The optimal choice for our purposes is to work with one additional element.

We define
\[
\rstr{B}=(B;\R{1}{1},\R{1}{2},\R{2}{1},\R{2}{2},\dots,\R{n}{1},\R{n}{2},\family{X}{X\subseteq \B{n+1}}),
\]
where
\begin{align*}
B &= \{a_1,a_2,0,1, \dots,n\} \\
\R[n]{i}{j} & =\left(\{a_1,a_2,0,1,\dots,i-1\}\times\{a_1,a_2,i\}\ \setm\ \{(a_j,i)\}\right)\\
& \cup \{(i+1,i+1),\dots,(n,n)\}. 
\end{align*}

\MyFig{ 
	\centering
	\begin{subfigure}{.5\textwidth}
		\centering
		\begin{tikzpicture}[
bod/.style={radius=\relbod},
textL/.style={left=\reltext},
teckyL/.style={left=\reltecky},
textR/.style={right=\reltext},
teckyR/.style={right=\reltecky},
edge/.style={\reledge}
]

\foreach \x/\ozn in {0/a, 1/0, 2/1, 3/t1, 4/i-1, 5/i, 6/i+1, 7/i+2, 8/t2, 9/n}
	\coordinate (l\ozn) at (0,{\x*(\relkrok)});
\foreach \x/\ozn in {0/a, 1/0, 2/1, 3/t1, 4/i-1, 5/i, 6/i+1, 7/i+2, 8/t2, 9/n}
	\coordinate (r\ozn) at ({0+\relrozestup},{\x*(\relkrok)});

\filldraw[fill=black, draw=black]
(la) circle[bod] node[textL] {$\{a_1,a_2\}$}
(l0) circle[bod] node[textL] {0}
(l1) circle[bod] node[textL] {1}
(lt1) node[teckyL] {$\vdots$}
(li-1) circle[bod] node[textL] {$i-1$}
(li) circle[bod] node[textL] {$i$}
(li+1) circle[bod] node[textL] {$i+1$}
(li+2) circle[bod] node[textL] {$i+2$}
(lt2) node[teckyL] {$\vdots$}
(ln) circle[bod] node[textL] {$n$};

\filldraw[fill=black, draw=black]
(ra) circle[bod] node[textR] {$\{a_1,a_2\}$}
(r0) circle[bod] node[textR] {0}
(r1) circle[bod] node[textR] {1}
(rt1) node[teckyR] {$\vdots$}
(ri-1) circle[bod] node[textR] {$i-1$}
(ri) circle[bod] node[textR] {$i$}
(ri+1) circle[bod] node[textR] {$i+1$}
(ri+2) circle[bod] node[textR] {$i+2$}
(rt2) node[teckyR] {$\vdots$}
(rn) circle[bod] node[textR] {$n$};

\draw[edge,dashed] (la)--(ri);
\draw[edge] (l0)--(ri);
\draw[edge] (l1)--(ri);
\draw[edge] (li-1)--(ri);

\draw[edge] (la)--(ra);
\draw[edge] (l0)--(ra);
\draw[edge] (l1)--(ra);
\draw[edge] (li-1)--(ra);

\draw[edge] (li+1)--(ri+1);
\draw[edge] (li+2)--(ri+2);
\draw[edge] (ln)--(rn);

\draw ({(\relrozestup)/2},{10*(\relkrok)-.6}) node[text width=5cm] {
The dashed line denotes that $a_j$ is not in the relation, but ``the other $a$'' is.
};
		\end{tikzpicture}
	\caption{Relation $\R{i}{j}$ on $\B{n+1}$}
	\end{subfigure}
	\begin{subfigure}{.5\textwidth}
		\centering
		\begin{tikzpicture}[
bod/.style={radius=\relbod},
textL/.style={left=\reltext},
teckyL/.style={left=\reltecky},
textR/.style={right=\reltext},
teckyR/.style={right=\reltecky},
edge/.style={\reledge}
]

\foreach \x/\ozn in {-1/a1, 0/a2, 1/0, 2/1, 3/2, 4/3}
	\coordinate (l\ozn) at (0,{\x*(\relkrok)});
\foreach \x/\ozn in {-1/a1, 0/a2, 1/0, 2/1, 3/2, 4/3}
	\coordinate (r\ozn) at ({0+\relrozestup},{\x*(\relkrok)});

\filldraw[fill=black, draw=black]
(la1) circle[bod] node[textL] {$a_1$}
(la2) circle[bod] node[textL] {$a_2$}
(l0) circle[bod] node[textL] {0}
(l1) circle[bod] node[textL] {1}
(l2) circle[bod] node[textL] {2}
(l3) circle[bod] node[textL] {3};

\filldraw[fill=black, draw=black]
(ra1) circle[bod] node[textR] {$a_1$}
(ra2) circle[bod] node[textR] {$a_2$}
(r0) circle[bod] node[textR] {0}
(r1) circle[bod] node[textR] {1}
(r2) circle[bod] node[textR] {2}
(r3) circle[bod] node[textR] {3};

\draw[edge] (la2)--(r2);
\draw[edge] (l0)--(r2);
\draw[edge] (l1)--(r2);

\draw[edge] (la1)--(ra1);
\draw[edge] (la2)--(ra1);
\draw[edge] (l0)--(ra1);
\draw[edge] (l1)--(ra1);

\draw[edge] (la1)--(ra2);
\draw[edge] (la2)--(ra2);
\draw[edge] (l0)--(ra2);
\draw[edge] (l1)--(ra2);

\draw[edge] (l3)--(r3);

\draw ({(\relrozestup)/2},{5*(\relkrok)-.6}) node[text width=5cm] {
Note that the pair $(a_1,2)$ is not connected.
};
		\end{tikzpicture}
	\caption{A concrete example -- $\R{2}{1}$ on $B=\B{4}$}
	\end{subfigure}
\caption{Relation $\R{i}{j}$}
} 

Similarly as before we define $\B{i+1}=\{a_1,a_2,0,1,\ldots,i\}$ and equip $B_i$ with the linear order 
\[
a_1 < a_2 < 0 < 1 < \ldots < i.
\]
 
Each polymorphism of $\rstr{B}$ is compatible with the equivalences $\B{i}|i|\dots|n$ ($i\in\{1,\dots,n\}$). The proof is analogous to the proof of Lemma \ref{lem:congruence}. An analogue of Lemma \ref{lem:comp} is the following.

\begin{lemma} \label{lem:comp2}
Let $t$ be a polymorphism of $\rstr{B}$ and $i\in\{0,\dots,n\}$. Let $l_{a_1},l_{a_2},l_{i},l_{i+1},\dots,l_{n}\in \N_0$ be such that their sum is $k$ and $l_{a_1}+l_{a_2}\leq l_{i}$. Let $\Bmin{i}=\B{i}\setm \{a_1,a_2\}$. Then
\begin{align*}
t(a_1,\dots,a_1,a_2,\dots,a_2,\Bmin{i},\dots,\Bmin{i},i+1,\dots,i+1,\dots,n,\dots,n)&\neq a_1,\\
t(\underbrace{a_2,\dots,a_2}_{l_{a_1}+l_{a_2}},\underbrace{a_1,\dots,a_1}_{l_{a_1}+l_{a_2}},\underbrace{\Bmin{i},\dots,\Bmin{i}}_{l_{i}-(l_{a_1}+l_{a_2})},\underbrace{i+1,\dots,i+1}_{l_{i+1}},\dots,\underbrace{n,\dots,n}_{l_{n}})&\neq a_2
\end{align*}
whenever
\begin{align*}
t(\underbrace{a_1,\dots,a_1}_{l_{a_1}},\underbrace{a_2,\dots,a_2}_{l_{a_2}},\underbrace{i,\dots,i}_{l_{i}},\underbrace{i+1,\dots,i+1}_{l_{i+1}},\dots,\underbrace{n,\dots,n}_{l_{n}})&=i
\end{align*}
(for any permutation of the arguments, the same for every row).
\end{lemma}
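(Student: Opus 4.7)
The approach is to apply compatibility of $t$ with the binary relations $\R{i}{1}$ and $\R{i}{2}$, exploiting the key fact that $(a_j,i)\notin\R{i}{j}$ by construction. For each of the two inequalities I build a two-row matrix of width $k$ whose columns all lie in the appropriate relation, with the top row equal to the tuple from the desired conclusion and the bottom row a permutation of the hypothesis tuple, so that the hypothesis (applied to this permutation) gives $t$ of the bottom row equal to $i$. Compatibility then forces $(t(\text{top}),i)$ to lie in the relation, and the excluded pair $(a_j,i)$ yields the required non-equality.

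For the first inequality, put the conclusion tuple $\vc{v}$ on top and obtain the bottom row by replacing every $\Bmin{i}$ entry of $\vc{v}$ by $i$, leaving the $a_1$, $a_2$, and $i+1,\dots,n$ entries unchanged. The bottom row is a permutation of the hypothesis tuple, so $t$ sends it to $i$. The columns are of the types $(a_1,a_1)$, $(a_2,a_2)$, $(b,i)$ with $b\in\Bmin{i}$, and diagonal pairs $(j,j)$ for $j>i$; all lie in $\R{i}{1}$ because the only forbidden pair is $(a_1,i)$. Compatibility then yields $(t(\vc{v}),i)\in\R{i}{1}$, which forces $t(\vc{v})\neq a_1$.

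For the second inequality, the assumption $l_{a_1}+l_{a_2}\leq l_i$ ensures that the $\Bmin{i}$-block in the top row $\vc{w}$ has the nonnegative length $l_i-(l_{a_1}+l_{a_2})$. Construct the bottom row so that, aligned with the first $l_{a_1}+l_{a_2}$ coordinates of $\vc{w}$ (which carry $a_2$), it contains $l_{a_1}$ copies of $a_1$ and $l_{a_2}$ copies of $a_2$; in the next $l_i$ non-diagonal positions (aligned with the $a_1$-block and the $\Bmin{i}$-block of $\vc{w}$) it contains $i$; diagonal entries stay equal. The bottom row again has the multiset of the hypothesis tuple, so $t$ maps it to $i$. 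The columns are $(a_2,a_1)$, $(a_2,a_2)$, $(a_1,i)$, $(b,i)$ with $b\in\Bmin{i}$, and diagonal pairs; all lie in $\R{i}{2}$ because the forbidden pair there is $(a_2,i)$, not these. Compatibility thus gives $(t(\vc{w}),i)\in\R{i}{2}$, hence $t(\vc{w})\neq a_2$. The condition $l_{a_1}+l_{a_2}\leq l_i$ is used precisely here: without it there would be too many copies of $i$ to fit in the bottom row without producing the forbidden pair $(a_2,i)$ in some column. The remaining bookkeeping is a routine check from the explicit definition of $\R{i}{j}$, and the ``for any permutation of the arguments'' clause of the hypothesis is used freely to rearrange the hypothesis tuple into the order our bottom row dictates.
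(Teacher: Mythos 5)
Your treatment of the second inequality is correct and is the intended argument: a two-row matrix whose columns lie in $\R{i}{2}$, whose bottom row is a permutation of the hypothesis tuple, and where the excluded pair $(a_2,i)$ does the work. The first inequality, however, has a genuine gap. The first conclusion tuple carries the same block sizes as the second one (the parenthetical ``the same for every row'' and the way the lemma is invoked in the worked example with $n=3$ both confirm this): it contains $l_{a_1}+l_{a_2}$ copies of $a_1$, $l_{a_1}+l_{a_2}$ copies of $a_2$, and $l_i-(l_{a_1}+l_{a_2})$ entries from $\Bmin{i}$. This doubling of the total number of $a$'s is the entire point of the lemma --- it is what lets the induction advance in the binary case. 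Your bottom row for this inequality (``replace every $\Bmin{i}$ entry by $i$, leave the $a_1$'s and $a_2$'s unchanged'') therefore contains $l_{a_1}+l_{a_2}$ copies of each of $a_1$ and $a_2$ and only $l_i-(l_{a_1}+l_{a_2})$ copies of $i$, which is \emph{not} a permutation of the hypothesis tuple (that one has $l_{a_1}$, $l_{a_2}$ and $l_i$ of them, respectively). So the hypothesis tells you nothing about your bottom row, and the column pattern $(a_1,a_1),(a_2,a_2),(b,i)$ cannot be realized with a legitimate bottom row.

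The repair is to mirror exactly what you did for the second inequality: under the $a_1$-block of the conclusion tuple (of length $l_{a_1}+l_{a_2}$) place the $l_{a_1}$ copies of $a_1$ and the $l_{a_2}$ copies of $a_2$ from the hypothesis tuple, and fill all remaining non-diagonal positions --- the $a_2$-block together with the $\Bmin{i}$-block, in total $(l_{a_1}+l_{a_2})+(l_i-(l_{a_1}+l_{a_2}))=l_i$ positions --- with the $l_i$ copies of $i$. The columns are then $(a_1,a_1)$, $(a_1,a_2)$, $(a_2,i)$, $(b,i)$ with $b\in\Bmin{i}$, and diagonal pairs, all of which lie in $\R{i}{1}$ since only $(a_1,i)$ is excluded there; compatibility gives $(t(\vc{v}),i)\in\R{i}{1}$ and hence $t(\vc{v})\neq a_1$. (The paper states this lemma without proof, as an analogue of Lemma~\ref{lem:comp}, so there is no written proof to compare against, but the above is clearly the intended argument and your second case already implements its mirror image correctly.)
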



\subsection{No low--arity NU polymorphism}

Similarly to Subsection~\ref{sec:non-existence} one shows that $\rstr{B}$ has no NU polymorphism of arity $2^{2^n}$. We discuss the necessary adjustment in the case $n=3$. 

Striving for a contradiction, let us assume that $t$ is an NU polymorphism of $\rstr{B}$ of arity $2^{2^3}=256$. Using the fact that $t$ is compatible with $\B{i}|i|\dots|n$  and Lemma \ref{lem:comp2} we show step by step that \[
t(\underbrace{a_1,\dots,a_1}_{128},\underbrace{a_2,\dots,a_2}_{128})\notin \{a_1,a_2\},
\]
which contradicts the compatibility of $t$ with the unary relation $\{a_1,a_2\}$. Similarly as in Subsection~\ref{sec:non-existence} we choose a suitable vector $\vc{w}_k$ in every step so that $t(\vc{w}_{k-1})\notin \{a_1,a_2\}$ implies $t(\vc{w}_k)\notin \{a_1,a_2\}$. In each step the number of $a_i$'s will be doubled.

Let us denote $\Bmin{3}=\B{3}\setm\{a_1,a_2\}={\{0,1,2\}}$. Since $t$ is NU, we have
\begin{align*}
t(a_1,3,3,\dots,3)&=3,\\
t(3,a_2,3,\dots,3)&=3,
\end{align*}
from which it follows by Lemma $\ref{lem:comp2}$ that
\begin{align*}
t(a_1,a_2,\Bmin{3},\dots,\Bmin{3})&\neq a_1,\\
t(a_1,a_2,\Bmin{3},\dots,\Bmin{3})&\neq a_2.
\end{align*}
In particular,
\begin{equation} \label{pr:bin0}
t(\vc{w}_0)=t(a_1,a_2,0,0,\underbrace{1,\dots,1}_{2^{4}-2^{2}=12},\underbrace{2,\dots,2}_{2^{8}-2^{4}})\notin \{a_1,a_2\}
\end{equation}
and similarly for any permutation of the arguments. We want to show
\begin{equation} \label{pr:bin1}
t(\vc{w}_1)=t(a_1,a_1,a_2,a_2,\underbrace{1,\dots,1}_{2^{4}-2^{2}=12},\underbrace{2,\dots,2}_{2^{8}-2^{4}})\notin \{a_1,a_2\}.
\end{equation}
If $t(\vc{w}_0)$ is equal to $1$ or $2$, then we get (\ref{pr:bin1}) using the compatibility with  $B_1|1|2|3$. Otherwise $t(\vc{w}_0)=0$ (we use the compatibility of $t$ with $\{a_1,a_2,0,1,2\}$) and using again the compatibility with $B_1|1|2|3$ we obtain
\begin{equation*}
t(\B{1},\B{1},\B{1},\B{1},1,\dots,1,2,\dots,2)\in \B{1}.
\end{equation*}
Since (\ref{pr:bin0}) holds for any permutation of arguments, then
\begin{align*}
t(a_1,a_2,0,0,1,\dots,1,2,\dots,2)&=0,\\
t(0,0,a_1,a_2,1,\dots,1,2,\dots,2)&=0,
\end{align*}
from which we deduce by Lemma~\ref{lem:comp2}
\begin{align*}
t(a_1,a_1,a_2,a_2,1,\dots,1,2,\dots,2)&\neq a_1,\\
t(a_1,a_1,a_2,a_2,1,\dots,1,2,\dots,2)&\neq a_2,
\end{align*}
so (\ref{pr:bin1}) holds. By the same argument, (\ref{pr:bin1}) also holds for any permutation of the arguments. Next we justify
\begin{equation} \label{pr:bin2}
t(\vc{w}_2)=t(a_1,a_1,a_1,a_1,a_2,a_2,a_2,a_2,\underbrace{0,\dots,0}_{8},\underbrace{2,\dots,2}_{2^8-2^4})\notin \{a_1,a_2\}.
\end{equation}

If $t(\vc{w}_1)=2$, then (\ref{pr:bin2}) is a consequence of the compatibility with $\B{2}|2|3$. Otherwise necessarily $t(\vc{w}_1)=1$ and using $\B{2}|2|3$ and (\ref{pr:bin1}) for suitable permutations we derive
\begin{align*}
t(a_1,a_1,a_2,a_2,\underbrace{1,\dots,1}_{12},2,\dots,2)&=1,\\
t(1,1,1,1,a_1,a_1,a_2,a_2,\underbrace{1,\dots,1}_{8},2,\dots,2)&=1,
\end{align*}
from which we deduce (\ref{pr:bin2}) using Lemma~\ref{lem:comp2}. In a similar way we can gradually prove the following:
\begin{align*}
t(\underbrace{a_1,\dots,a_1}_{2^3},\underbrace{a_2,\dots,a_2}_{2^3},\underbrace{2,\dots,2}_{2^8-2^4})\notin\{a_1,a_2\},\\
t(\underbrace{a_1,\dots,a_1}_{2^4},\underbrace{a_2,\dots,a_2}_{2^4},\underbrace{0,\dots,0}_{2^6-2^5},\underbrace{1,\dots,1}_{2^8-2^6})\notin\{a_1,a_2\},\\
t(\underbrace{a_1,\dots,a_1}_{2^5},\underbrace{a_2,\dots,a_2}_{2^5},\underbrace{1,\dots,1}_{2^8-2^6})\notin\{a_1,a_2\},\\
t(\underbrace{a_1,\dots,a_1}_{2^6},\underbrace{a_2,\dots,a_2}_{2^6},\underbrace{0,\dots,0}_{2^8-2^7})\notin\{a_1,a_2\},\\
t(\underbrace{a_1,\dots,a_1}_{2^7},\underbrace{a_2,\dots,a_2}_{2^7})\notin\{a_1,a_2\}.
\end{align*}

\subsection{NU polymorphism exists}

The following operation of arity $2^{2^n}+1$ is an NU polymorphism of $f$:
\[
f(\vc{x})=
\begin{cases}
n, & \text{if $2^{2^{n}}+1>2^{2^{n}}\cdot\lessB{n}{\vc{x}}$},\\
n-1, & \text{else if $\lessB{n}{\vc{x}}>2^{2^{n-1}}\cdot\lessB{n-1}{\vc{x}}$},\\
\vdots \\
r, & \text{else if $\lessB{r+1}{\vc{x}}>2^{2^{r}}\cdot\lessB{r}{\vc{x}}$},\\
\vdots \\
0, & \text{else if $\lessB{1}{\vc{x}}>2\cdot\lessB{0}{\vc{x}}$},\\
a_2, & \text{else if $\val{a_2}{\vc{x}}>\val{a_1}{\vc{x}}$},\\
a_1, & \text{else},
\end{cases}
\]
where $\lessB{r}{\vc{x}}$ has the same meaning as in Subsection~\ref{sec:non-existence}, i.e. $\lessB{r}{\vc{x}}$ is the number of occurrences of elements from $\B{r}$ in $\vc{x}$.

Operation $f$ is an NU operation compatible with all the unary relations. This can be verified the same way it was in Subsection~\ref{sec:construction}.

The compatibility of $f$ with $\R{k}{s}$ is also analogous. Let $k\in{\{0,\dots,n\}}$, $s\in{\{1,2\}}$. Let us denote $l=2^{2^n}$ and let $\vc{u}=(u_1,\dots,u_l)$, $\vc{w}=(w_1,\dots,w_l)$, where $u_i,w_i\in B$, $(u_i,w_i)\in\R{k}{s}$ for every $i\in{\{1,\dots,l\}}$. We claim that also $(f(\vc{u}),f(\vc{w}))\in\R{k}{s}$.
\[
	\begin{matrix}
u_1 & u_2 & \dots & u_l & \overset{f}{\rightarrow} & f(\vc{u}) \\
w_1 & w_2 & \dots & w_l & \overset{f}{\rightarrow} & f(\vc{w}) \\
\raisebox{\depth}{\rotatebox{270}{$\in$}} & \raisebox{\depth}{\rotatebox{270}{$\in$}} & & \raisebox{\depth}{\rotatebox{270}{$\in$}} & & \raisebox{\depth}{\rotatebox{270}{$\overset{\rotatebox{90}{?}}{\in}$}} \\
\R{k}{s} & \R{k}{s} & \dots & \R{k}{s} & & \R{k}{s}
	\end{matrix}
\]
The same way as in Subsection \ref{sec:construction} we observe that if $f(\vc{u})=r$ or $f(\vc{w})=r$ for some $r>k$, then $f(\vc{u})=f(\vc{w})=r$ and $(f(\vc{u}),f(\vc{w}))\in\R{k}{s}$.

Suppose that $f(\vc{u})\leq k$ and $f(\vc{w})\leq k$. Then the only possible outcome not in $\R{k}{s}$ is $f(\vc{u})=a_s$, $f(\vc{w})=k$. We show by contradiction this is impossible. The same reasoning we used in Subsection~\ref{sec:construction} to get (\ref{kompS:04}) is now used to get
\begin{equation} \label{kompR:04}
2\cdot\lessB{0}{\vc{w}}<\lessB{0}{\vc{u}}=\val{a_1}{\vc{u}}+\val{a_2}{\vc{u}}.
\end{equation}
Moreover, $\val{a_s}{\vc{u}}\leq\lessB{0}{\vc{w}}$ because if $u_i=a_s$ for some $i$, then either $w_i=a_1$ or~$w_i=a_2$ (since $(u_i,w_i)\in\R{k}{s}$). From this and (\ref{kompR:04}) we obtain\[
2\cdot\val{a_s}{\vc{u}}<\val{a_1}{\vc{u}}+\val{a_2}{\vc{u}}.
\]
We take $\val{a_s}{\vc{u}}$ away from both sides and see that the number of $a_s$ in~$\vc{u}$ is strictly lesser than the number of ``the other~$a$'s'', therefore $f(\vc{u})\neq a_s$, a contradiction with our assumption.


\bibliographystyle{plain}
\bibliography{CSPbib}

\end{document}